\newtheorem{defi}{Definition}
\newtheorem{theorem}[defi]{Theorem}
\newtheorem{remark}[defi]{Remark}
\newtheorem{lemma}[defi]{Lemma}
\newtheorem{cor}[defi]{Corollary}
\title{The spectral gap of graphs and Steklov eigenvalues on
  surfaces}
\author{Bruno Colbois and Alexandre Girouard}
\begin{document}
\maketitle

\begin{abstract}
  Using expander graphs, we construct a sequence
  $\{\Omega_N\}_{N\in\mathbb{N}}$ of smooth compact surfaces with boundary of
  perimeter $N$, and with the first non-zero Steklov
  eigenvalue $\sigma_1(\Omega_N)$ uniformly bounded away from
  zero. This answers a question which was raised in~\cite{gp3}. The
  genus of $\Omega_N$ grows linearly with $N$, this is 
  the optimal growth rate.
\end{abstract}

\section{Introduction}
Let $\Omega$ be a compact, connected, orientable smooth Riemannian
surface with boundary $\Sigma=\partial\Omega$. The Steklov eigenvalue
problem on $\Omega$ is
\begin{gather*}
  \Delta f=0 \ \text{in }\Omega,\ \ \partial_\nu f=\sigma f\ \text{on} \ \Sigma,
\end{gather*}
where $\Delta$ is the Laplace--Beltrami operator on $\Omega$ and
$\partial_\nu$ denotes the outward normal derivative along the boundary $\Sigma$.
The Steklov spectrum of $\Omega$ is denoted by
$$
0=\sigma_0 < \sigma_1(\Omega) \le \sigma_2(\Omega) \le... \nearrow \infty,
$$
where each eigenvalue is repeated according to its multiplicity.
In~\cite{gp3}, the second author and I. Polterovich asked the
following question:

\emph{Is there a sequence $\{\Omega_N\}$ of surfaces with boundary 
such that $\sigma_1(\Omega_N)L(\partial\Omega_N) \rightarrow \infty$ as $N\to\infty$?}

\medskip
The goal of this paper is to give a positive answer to this question.
\begin{theorem}\label{Theorem:MainIntro}
  There exist a sequence $\{\Omega_N\}_{N\in\mathbb{N}}$ of compact surfaces
  with boundary and a constant $C>0$ such that for each $N\in\mathbb{N}$,
  $\mbox{genus}(\Omega_N)=1+N,$ and 
    $$\sigma_1(\Omega_N)L(\partial\Omega_N)\geq CN.$$
\end{theorem}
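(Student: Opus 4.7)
\medskip

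\noindent\emph{Construction.} Let $\{G_N\}_{N\in\mathbb N}$ be a family of $4$-regular expander graphs on $N$ vertices whose combinatorial Laplacians have a uniform spectral gap $\lambda_1(G_N)\geq \lambda_0>0$ (for instance, Ramanujan graphs). Fix once and for all a model \emph{node}: a round sphere with five small open disks removed, one distinguished as the \emph{Steklov disk} and the other four as \emph{gluing disks}, all with unit boundary circle. Fix also a model \emph{tube} $C$: a short cylinder with unit boundary circles. For each $N$ construct $\Omega_N$ by taking an isometric copy $S_v$ of the model node for every vertex $v\in G_N$ and an isometric copy $C_e$ of the tube for every edge $e\in G_N$, and gluing the two ends of each $C_e$ to gluing disks of the two incident nodes (smoothed near the seams). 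A direct Euler characteristic computation gives $\chi(\Omega_N)=-3N$, hence $\mbox{genus}(\Omega_N)=1+N$, while $\partial\Omega_N$ is a disjoint union of $N$ unit circles, so $L(\partial\Omega_N)=N$. The whole task is then to show $\sigma_1(\Omega_N)\geq C$ for some constant independent of $N$.

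\medskip

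\noindent\emph{From a test function to a graph function.} For a test function $f\in C^\infty(\Omega_N)$ with $\int_{\partial\Omega_N}f=0$, define $\xi\colon V(G_N)\to \R$ by letting $\xi(v)$ be the average of $f$ over the Steklov boundary circle of $S_v$. The mean-zero condition becomes $\sum_v \xi(v)=0$. Split
\[
\int_{\partial\Omega_N}f^2=\sum_v\xi(v)^2+\sum_v\int_{\partial_b S_v}\bigl(f-\xi(v)\bigr)^2.
\]
The oscillation sum is controlled nodewise by a uniform Poincar\'e-type constant $A$ (depending only on the fixed model node $S$), giving
\[
\sum_v\int_{\partial_b S_v}\bigl(f-\xi(v)\bigr)^2\leq A\sum_v\int_{S_v}|\nabla f|^2\leq A\int_{\Omega_N}|\nabla f|^2.
\]
For the constant-on-each-circle part, apply the spectral gap of $G_N$:
\[
\sum_v\xi(v)^2\leq \lambda_0^{-1}\sum_{(u,v)\in E_N}\bigl(\xi(u)-\xi(v)\bigr)^2.
\]

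\medskip

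\noindent\emph{Discrete-to-continuous transfer.} The crux is to dominate each discrete gradient $\bigl(\xi(u)-\xi(v)\bigr)^2$ by the Dirichlet energy of $f$ along the node--tube--node chain joining $u$ and $v$. Insert the averages $a_{u,e}$ of $f$ on the two gluing circles of the tube $C_e$ incident at $u,v$ and use a triangle inequality: $(\xi(u)-\xi(v))^2$ is bounded by a constant times $(\xi(u)-a_{u,e})^2+(a_{u,e}-a_{v,e})^2+(a_{v,e}-\xi(v))^2$. The middle term is bounded by a Cauchy--Schwarz estimate along the tube: $(a_{u,e}-a_{v,e})^2\leq \tfrac{1}{2\pi}\int_{C_e}|\nabla f|^2$. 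The outer terms are bounded by the Poincar\'e inequality on the model node, with constants depending only on $S$. Since the graph is $4$-regular, summing over all edges produces each node energy at most $4$ times, yielding a uniform constant $B$ with
\[
\sum_{(u,v)\in E_N}\bigl(\xi(u)-\xi(v)\bigr)^2\leq B\int_{\Omega_N}|\nabla f|^2.
\]
Combining the three inequalities bounds $\int_{\partial\Omega_N}f^2$ by a uniform multiple of $\int_{\Omega_N}|\nabla f|^2$, so by the variational characterization $\sigma_1(\Omega_N)\geq C$, and the theorem follows from $L(\partial\Omega_N)=N$.

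\medskip

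\noindent\emph{Main obstacle.} Every step is forced once the basic pieces are fixed, but the delicate point is that $\xi(v)$ lives on the Steklov circle whereas the tube inequality naturally controls the gluing-circle averages. Bridging the two requires a Poincar\'e inequality on the node with a constant independent of $N$, which is available precisely because all nodes are isometric copies of one fixed surface. Ensuring that the smoothing at the seams, the Poincar\'e constant on the nodes, and the Cauchy--Schwarz constant on the tubes all survive unchanged for every $N$ is the main technical content of the argument.
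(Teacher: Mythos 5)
Your proposal is correct and follows essentially the same route as the paper: glue copies of a fixed fundamental piece along a $4$-regular expander family, use the boundary-circle averages $\xi$ as a trial function for the graph Laplacian, and bound both the oscillation term (by a uniform Poincar\'e/sloshing-type inequality on the fixed piece) and the discrete Dirichlet form $\sum_{u\sim v}(\xi(u)-\xi(v))^2$ (by a trace-plus-Poincar\'e estimate on adjacent pieces) in terms of $\int_{\Omega_N}|\nabla f|^2$. The only cosmetic differences are that you insert explicit tubes along edges rather than identifying gluing circles directly, and the constant $\tfrac{1}{2\pi}$ in your Cauchy--Schwarz step along the tube should be the tube's length divided by its circumference --- harmless, since it depends only on the fixed model tube.
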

For each $\gamma\in\mathbb{N}$, consider the class
$\mathcal{S}_\gamma$ of all smooth compact surfaces of genus $\gamma$
with non-empty boundary, and define
\begin{gather*}
  \sigma^\star(\gamma)=\sup_{\Omega\in\mathcal{S}_\gamma}\sigma_1(\Omega)L(\partial\Omega).
\end{gather*}
Few results on $\sigma^\star(\gamma)$ are known.
G. Kokarev proved~\cite{kokarev1} that for each genus $\gamma$,
\begin{gather}\label{BoundKokarev}
  \sigma^\star(\gamma)\leq 8\pi(\gamma+1).
\end{gather}
See also~\cite{ceg2} for a similar bound involving the higher Steklov
eigenvalues $\sigma_k(\Omega)$, and~\cite{gp3} for a bound involving
the number of boundary components.
In~\cite{fraschoen2}, A. Fraser and R. Schoen
proved that $\sigma^\star(0)=4\pi$. In this case, the supremum is attained in the limit
by a sequence of surfaces with their number of boundary components
tending to infinity. It follows from~(\ref{BoundKokarev}) that the growth of
$\sigma^\star(\gamma)$ is sublinear. 
\begin{cor}\label{Theorem:genus}
  There exists a constant $C>0$ such that for each 
  $\gamma\in\mathbb{N}$,
  $\sigma^\star(\gamma)\geq C\gamma.$
\end{cor}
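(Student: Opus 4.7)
The plan is to deduce the corollary directly from Theorem~\ref{Theorem:MainIntro} by a relabeling argument: since the sequence $\{\Omega_N\}$ produced by the theorem consists of surfaces of genus exactly $1+N$, each $\Omega_N$ belongs to the class $\mathcal{S}_{N+1}$, and therefore already provides a specific competitor in the supremum defining $\sigma^\star(N+1)$.

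Concretely, for any integer $\gamma\geq 2$, I would set $N=\gamma-1$ so that $\Omega_N\in\mathcal{S}_\gamma$, and then by the definition of $\sigma^\star(\gamma)$ as a supremum together with the conclusion of Theorem~\ref{Theorem:MainIntro},
\begin{gather*}
  \sigma^\star(\gamma)\;\geq\;\sigma_1(\Omega_{\gamma-1})\,L(\partial\Omega_{\gamma-1})\;\geq\;C(\gamma-1)\;\geq\;\tfrac{C}{2}\,\gamma,
\end{gather*}
the last inequality using $\gamma\geq 2$. This gives the desired linear lower bound on the range $\gamma\geq 2$ with constant $C/2$.

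The remaining values $\gamma=0$ and $\gamma=1$ are then handled by soft considerations that do not require any new construction. For $\gamma=0$ the inequality is trivial since the right-hand side is zero, but in fact Fraser--Schoen's result $\sigma^\star(0)=4\pi$ quoted in the introduction gives strict positivity. For $\gamma=1$ it suffices to observe that $\mathcal{S}_1$ is non-empty and any fixed surface in $\mathcal{S}_1$ (for instance a smooth torus with a disc removed) has $\sigma_1 L>0$, so $\sigma^\star(1)$ is a strictly positive finite quantity; shrinking the constant to $C':=\min\bigl(C/2,\sigma^\star(1)\bigr)$ then yields the uniform inequality $\sigma^\star(\gamma)\geq C'\gamma$ for every $\gamma\in\mathbb{N}$.

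There is essentially no obstacle here beyond matching indices between the sequence from Theorem~\ref{Theorem:MainIntro} and the labeling of $\mathcal{S}_\gamma$, and adjusting the constant to absorb the small-genus cases; the entire content of the corollary is carried by the main theorem, which supplies the linear growth rate that, combined with Kokarev's upper bound~(\ref{BoundKokarev}), pins down the asymptotic behaviour of $\sigma^\star(\gamma)$ up to a multiplicative constant.
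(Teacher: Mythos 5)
Your proposal is correct and is essentially the argument the paper intends: the corollary is an immediate consequence of Theorem~\ref{Theorem:MainIntro} since $\Omega_N$ has genus $1+N$ and hence witnesses $\sigma^\star(1+N)\geq CN\geq\frac{C}{2}(1+N)$, with the low-genus cases absorbed by shrinking the constant. The paper leaves exactly this relabeling implicit, so there is nothing to add.
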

In the construction of $\{\Omega_N\}$ that we propose, the number of
boundary components also tends to infinity. It would be interesting to
know if this condition is necessary.


\begin{remark}
  The problem of constructing closed surfaces $M$ with large normalized
  first non-zero eigenvalue $\lambda_1(M)\mbox{Area}(M)$ has been considered by several
  authors. See for instance~\cite{buser1,brooks1,brooks2}. Our proofs
  uses methods which are related to those of~\cite{colbmat}.
\end{remark}
\subsubsection*{Plan of the paper}
In Section~\ref{Section:construction}, we present the construction of
a surface $\Omega_\Gamma$ which is obtained from a regular graph
$\Gamma=(V,E)$ by sewing copies of a \emph{fundamental piece}
following the pattern of the graph $\Gamma$. In
Section~\ref{Section:comparison}, we introduce the spectrum
of the graph $\Gamma$ and state a comparison result
(Theorem~\ref{mainthm}) between $\lambda_1(\Gamma)$ and
$\sigma_1(\Omega_\Gamma)$. This is then used, in conjunction with
expander graphs, to prove Theorem~\ref{Theorem:MainIntro}. In
Section~\ref{Section:ProofComparison}, we present the comparison
argument leading to the proof of Theorem~\ref{mainthm}.

\begin{remark}
  While this paper was in the final stage of its preparation, we
  learned that Mikhail Karpukhin also has developed a method for
  construction surfaces with large normalized Steklov eigenvalue
  $\sigma_1L$. His approach is different, and his work will appear
  in~\cite{Karpukhin}.
\end{remark}

\section{Constructing manifolds from graphs}\label{Section:construction}
Let $\Gamma$ be a finite connected regular graph of degree $k$. The
set of vertices of $\Gamma$ is denoted $V=V(\Gamma)$, the set of edges
is denoted $E=E(\Gamma)$. The number of vertices of $\Gamma$ is
$|V(\Gamma)|$. We will construct a Riemannian surface
$\Omega_\Gamma$ modelled on the graphs $\Gamma$ 
from a fixed orientable Riemannian surface $M_0$ which we call
the \emph{fundamental piece} (See Figure~\ref{fundamentalpiece}) and
which is assumed to satisfy the following hypotheses:
\begin{enumerate}
\item The boundary of $M_0$ has $k+1$ components $\Sigma_0$,
  $B_1,\cdots,B_{k}$.
\item Each of the boundary component is a geodesic curve of length 1.
\item The component $\Sigma_0$ has a neighbourhood
  which is isometric to the cylinder $C_0=\Sigma_0 \times[0,1]\subset M_0$,
  and $\Sigma_0$ corresponds to $\{0\}\times\Sigma$.
\end{enumerate}
\begin{figure}
  \centering
  \psfrag{b1}[][][1]{$B_1$}
  \psfrag{b2}[][][1]{$B_2$}
  \psfrag{b3}[][][1]{$B_3$}
  \psfrag{b4}[][][1]{$B_4$}
  \psfrag{S0}[][][1]{$\Sigma_0$}
  \psfrag{C0}[][][1]{$C_0$}
  \includegraphics[width=5cm]{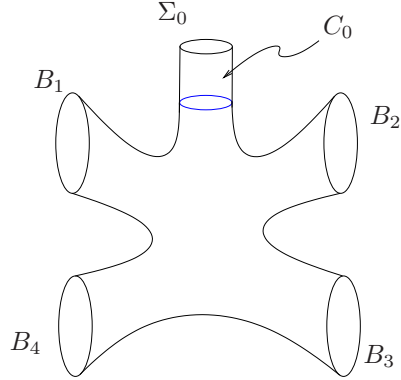}
  \caption{The fundamental piece $M_0$ for a graph $\Gamma$ of degree 4.}
  \label{fundamentalpiece}
\end{figure}
The manifold $\Omega_\Gamma$ is obtained by sewing copies of the
fundamental piece $M_0$ following the pattern of the graph
$\Gamma$: to each vertex $v\in V$, there corresponds a isometric
copy $M_v$ of $M_0$. The edges emanating from a vertex $v\in
V(\Gamma)$ are labelled $e_1(v),\cdots,e_k(v)$.
The corresponding boundary components
$B_1,\cdots,B_{k}$ are identified along these edges: if $v\sim w$ then
there are $1\leq i,j\leq k$ such that $e_i(v)=e_j(w)$ and the boundary
component $B_i$ of $M_v$ is identified to the boundary component $B_j$
of $M_w$. The manifold $\Omega_\Gamma$ has one boundary 
component $\Sigma_v$ for each vertex $v\in V(\Gamma)$, each of them
being isometric to $\Sigma_0$ with corresponding cylindrical
neighbourhood $M_v\supset C_v\cong [0,1]\times\Sigma_0$

The following lemma shows that the genus of $\Omega_\Gamma$ grows
linearly with the number of vertices of the graph $\Gamma$.
\begin{lemma}\label{Lemma:genus}
  The genus of the surface $\Omega_\Gamma$ is
  \begin{gather*}
    \gamma(\Omega_\Gamma)=1+\left(\gamma(M_0)+\frac{k}{2}-1\right)|V(\Gamma)|,
  \end{gather*}
  where $\gamma(M_0)$ is the genus of the fundamental piece $M_0$, and
  $|V(\Gamma)|$ is the number of vertices of $\Gamma$.
\end{lemma}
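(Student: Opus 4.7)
The natural approach is to compute the Euler characteristic of $\Omega_\Gamma$ in two different ways. First, I would compute $\chi(M_0)$ directly from the standard formula $\chi = 2 - 2g - b$ for a compact orientable surface of genus $g$ with $b$ boundary components. Since $M_0$ has $k+1$ boundary components, this gives $\chi(M_0) = 1 - 2\gamma(M_0) - k$.

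Next, I would use additivity of Euler characteristic along the gluings. The surface $\Omega_\Gamma$ is assembled from $|V(\Gamma)|$ isometric copies of $M_0$, identified in pairs along the boundary circles $B_i$ prescribed by the edges of $\Gamma$. Since each such identification occurs along a circle and $\chi(S^1) = 0$, Mayer--Vietoris (or simple triangulation bookkeeping) yields
\begin{gather*}
  \chi(\Omega_\Gamma) = |V(\Gamma)|\,\chi(M_0) = |V(\Gamma)|\bigl(1 - 2\gamma(M_0) - k\bigr).
\end{gather*}
Regularity of $\Gamma$ is exactly what ensures the number of $B_i$-components meeting at each vertex matches the degree, so all the "inner" boundary components get glued away in pairs, leaving only the $|V(\Gamma)|$ components $\Sigma_v$ (one per vertex) as boundary of $\Omega_\Gamma$.

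Finally, I would apply $\chi = 2 - 2\gamma - b$ to $\Omega_\Gamma$ itself with $b = |V(\Gamma)|$ and solve for $\gamma(\Omega_\Gamma)$. Setting
\begin{gather*}
  2 - 2\gamma(\Omega_\Gamma) - |V(\Gamma)| = |V(\Gamma)|\bigl(1 - 2\gamma(M_0) - k\bigr),
\end{gather*}
and rearranging gives $\gamma(\Omega_\Gamma) = 1 + \bigl(\gamma(M_0) + \frac{k}{2} - 1\bigr)|V(\Gamma)|$, as claimed. There is no real obstacle here: the only point worth being careful about is that $\Omega_\Gamma$ is connected and orientable (so that the genus formula applies), which follows from connectedness of $\Gamma$ and the fact that gluing two orientable surfaces along a circle with compatible orientations produces an orientable surface; both are easy consequences of the construction in Section~\ref{Section:construction}.
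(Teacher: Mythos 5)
Your proposal is correct, and its overall skeleton matches the paper's: compute $\chi(\Omega_\Gamma)=|V(\Gamma)|\,\chi(M_0)$, then solve $\chi=2-2\gamma-b$ with $b=|V(\Gamma)|$. The one genuine difference is how the multiplicativity of the Euler characteristic is justified. You argue topologically, via Mayer--Vietoris (or triangulation bookkeeping) along the gluing circles, using $\chi(S^1)=0$; this is more general, as it needs nothing about the metric. The paper instead exploits its standing hypothesis that every boundary component of $M_0$ is a geodesic: the geodesic-curvature boundary term in Gauss--Bonnet vanishes, so $\chi(\Omega_\Gamma)=\frac{1}{2\pi}\int_{\Omega_\Gamma}K=\frac{1}{2\pi}|V(\Gamma)|\int_{M_0}K=\chi(M_0)|V(\Gamma)|$, and additivity of the curvature integral over the copies $M_v$ does the work. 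Both are complete; yours buys independence from the geometric setup, the paper's buys a one-line computation from hypotheses it has already imposed for other reasons. Your closing remarks on connectedness and orientability (needed for the formula $\chi=2-2\gamma-b$) are points the paper leaves implicit, so including them is a mild bonus rather than a deviation.
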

\begin{remark}
  Because the number of vertices of odd degree is always even,
  $1+(\gamma(M_0)+\frac{k}{2}-1)|V(\Gamma)|$ is an integer.
\end{remark}  
\begin{proof}[Proof of Lemma~\ref{Lemma:genus}]
  The genus $\gamma$ and the Euler--Poincaré characteristic $\chi$ of a
  smooth compact orientable surface with $b$ boundary components are related by
  the formula
  $$\chi=2-2\gamma-b.$$
  Let $K:\Omega_\Gamma\rightarrow\mathbb{R}$ be the Gauss
  curvature. Since the boundary curves $\Sigma_0,B_1,\cdots,B_k$ are geodesics,
  it follows  from the Gauss--Bonnet formula hat
  \begin{align*}
    \chi(\Omega_\Gamma)=\frac{1}{2\pi}\int_{\Omega_\Gamma}K
    =\frac{1}{2\pi}|V(\Gamma)|\int_{M_0}K
    =\chi(M_0)|V(\Gamma)|
    =(1-2\gamma(M_0)-k)|V|,
  \end{align*}
  where we have used that the number of boundary components of $M_0$
  is $k+1$.
  It follows that the genus of $\Omega_\Gamma$ is
  \begin{gather*}
    \gamma(\Omega_\Gamma)=\frac{1}{2}(2-\chi(\Omega_\Gamma)-|V|)
    =1+\left(\gamma(M_0)+\frac{k}{2}-1\right)|V|
  \end{gather*}
\end{proof}

\section{Comparing eigenvalues on graphs to Steklov eigenvalues}
\label{Section:comparison}
Our main reference for spectral theory on graphs is~\cite{Chung}.
The space $\ell^2(V(\Gamma))=\{x:V(\Gamma)\rightarrow\mathbb{R}\}$
is equipped with the norm defined by
$$\|x\|^2=\sum_{v\in V(\Gamma)} x(v)^2.$$
The discrete Laplacian $\Delta_\Gamma$ acts on
$\ell^2(V(\Gamma))$ and is defined  by the quadratic form
\begin{gather}\label{quadraticform}
  q_\Gamma(x)=\sum_{v\sim w}\bigl(x(v)-x(w)\bigr)^2,
\end{gather}
where the symbol $v\sim w$ means that the vertices $v$ and $w$ of
$\Gamma$ are adjacent, and the sum appearing in~\eqref{quadraticform}
is over all non-oriented edges of $\Gamma$.
The discrete Laplacian $\Delta_\Gamma$ has a finite non-negative
spectrum which we denote by
$$
0=\lambda_0<\lambda_1(\Gamma) \le \lambda_2(\Gamma) \le...\le
\lambda_{\vert V\vert-1}(\Gamma),
$$
where each eigenvalue is repeated according to its multiplicity.
The first non zero eigenvalue admits the following variational characterization:
\begin{gather}\label{charvargraph}
  \lambda_1(\Gamma)=
  \min\left\{
    \frac{q_\Gamma(x)}{\|x\|^2}\,\Bigl|\Bigr.\,x:V(\Gamma)\rightarrow\mathbb{R}, \sum_{v\in V(\Gamma)} x(v)=0\right\}.
\end{gather}
In order to compare $\lambda_1(\Gamma)$ to the fist non-zero
Steklov eigenvalue of $\Omega_\Gamma$, the following variational
characterization will be used:
\begin{gather*} 
  \sigma_1(\Omega_\Gamma)= \inf\left\{ \int_{\Omega_\Gamma} |\nabla f|^2\,\Bigl|\Bigr.\, f\in C^\infty(\Omega_\Gamma),\,
    \int_{\partial\Omega_\Gamma}f=0,\, \int_{\partial\Omega_\Gamma}f^2=1\right\}.
\end{gather*}
The main result of this paper will follow from the following estimate.
\begin{theorem}\label{mainthm}
  There exist constants $\alpha,\beta>0$ depending only on the
  fundamental piece $M_0$ such that 
  \begin{gather*}
    \alpha\leq\frac{\sigma_1(\Omega_\Gamma)}{\lambda_1(\Gamma)}
    \leq
    \beta
  \end{gather*}
\end{theorem}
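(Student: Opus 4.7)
I would prove the two inequalities as dual constructions. For the \emph{upper bound} $\sigma_1(\Omega_\Gamma) \le \beta\lambda_1(\Gamma)$, let $x:V(\Gamma)\to\R$ be a first eigenfunction with $\sum x(v)=0$. On each piece $M_v$ I would set $f\equiv x(v)$, except in a thin collar of each gluing boundary component $B_i(v)=B_j(w)$, chosen once and for all inside $M_0$, where $f$ interpolates smoothly between $x(v)$ and $x(w)$. The Dirichlet energy on such a collar is a fixed multiple of $(x(v)-x(w))^2$, yielding $\int_{\Omega_\Gamma}|\nabla f|^2 \le C_1 q_\Gamma(x)$. Since $f\equiv x(v)$ on $\Sigma_v$ and $L(\Sigma_v)=1$, one has $\int_{\partial\Omega_\Gamma}f=0$ and $\int_{\partial\Omega_\Gamma}f^2=\|x\|^2$, and the variational characterization of $\sigma_1$ yields $\sigma_1(\Omega_\Gamma)\le C_1 \lambda_1(\Gamma)$.

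For the \emph{lower bound} $\sigma_1(\Omega_\Gamma)\ge\alpha\lambda_1(\Gamma)$, I would start from a Steklov eigenfunction $f$ with $\int_{\partial\Omega_\Gamma}f=0$ and $\int_{\partial\Omega_\Gamma}f^2=1$, and test \eqref{charvargraph} with the discrete function $x(v):=\int_{\Sigma_v}f$ (which satisfies $\sum x(v)=0$). Two uniform estimates are needed, with constants depending only on $M_0$. First, on the cylindrical neighbourhood $C_v\cong\Sigma_0\times[0,1]$ I would Fourier-expand $f|_{C_v}$ in the $\Sigma_0$-direction: the zero mode recovers $x(v)$, and the higher modes satisfy a one-dimensional Poincaré inequality on $[0,1]$ with a uniform constant, so that
$$\int_{\Sigma_v}(f-x(v))^2 \,\le\, C\int_{C_v}|\nabla f|^2.$$
Second, gluing two copies of $M_0$ along one $B_i$ gives a fixed compact surface on which the Neumann--Poincaré inequality provides $(y(v)-y(w))^2\le C\int_{M_v\cup M_w}|\nabla f|^2$ for adjacent $v,w$, where $y(v):=|M_v|^{-1}\int_{M_v}f$; combined with an $L^2$-comparison of $x(v)$ and $y(v)$ via the same cylindrical Fourier argument, this yields $(x(v)-x(w))^2\le C''\int_{M_v\cup M_w}|\nabla f|^2$. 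Summing over edges (each $M_v$ appearing in at most $k$ of them) gives $q_\Gamma(x)\le C'\sigma_1(\Omega_\Gamma)$, while summing the first display over $v$ gives $\|x\|^2\ge 1-C\sigma_1(\Omega_\Gamma)$. A simple dichotomy concludes: if $\sigma_1(\Omega_\Gamma)\ge (2C)^{-1}$ the trivial bound $\lambda_1(\Gamma)\le 2k$ already produces the desired inequality; otherwise $\|x\|^2\ge 1/2$ and \eqref{charvargraph} yields $\lambda_1(\Gamma)\le 2C'\sigma_1(\Omega_\Gamma)$.

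The \emph{main obstacle} is ensuring that all Poincaré-type constants genuinely depend only on $M_0$ and not on $\Omega_\Gamma$. The cylindrical neighbourhood hypothesis on $\Sigma_0$ is precisely what lets the Fourier argument produce a constant independent of the graph; without it one would need trace estimates sensitive to the geometry near $\Sigma_0$. Likewise, localizing the Neumann--Poincaré inequality to a domain isometric to two glued copies of $M_0$ is what keeps the estimate blind to the remainder of $\Omega_\Gamma$. I would make both of these localizations precise \emph{before} running the summation over edges, since it is this uniformity that allows the bound to scale correctly with $|V(\Gamma)|$.
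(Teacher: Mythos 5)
Your proposal is correct, and the substantive direction --- the lower bound $\sigma_1(\Omega_\Gamma)\geq\alpha\,\lambda_1(\Gamma)$, which is the one needed for the application to expanders --- follows the paper's argument almost step for step: you test \eqref{charvargraph} with $x(v)=\int_{\Sigma_v}f$, control the oscillating part of $f$ on each cylinder $C_v$ by a uniform trace-type inequality (the paper phrases your Fourier computation as the first non-zero eigenvalue of the mixed Steklov--Neumann ``sloshing'' problem on $[0,1]\times\Sigma_0$, Lemma~\ref{Lemma:localestimate}), and control $(x(v)-x(w))^2$ for $v\sim w$ by a Neumann--Poincar\'e inequality on the fixed doubled piece $M_v\cup M_w$ (Lemma~\ref{Lemma:EstimateAverage}), before summing over edges using $k$-regularity. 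Two points of difference there are minor: the paper exploits the exact orthogonality $\int_{\partial\Omega_\Gamma}f^2=\|x\|^2+\int_{\partial\Omega_\Gamma}\tilde f^2$ to rearrange directly into $\sigma_1\geq(C_0/\lambda_1+1/\mu)^{-1}$ and then uses $\lambda_1\leq k$, so your dichotomy is not needed; and your comparison of $x(v)$ with the bulk average $y(v)$ is not purely a ``cylindrical Fourier argument'' --- passing from the slice average over $\Sigma_v$ to the average over all of $M_v$ requires a Poincar\'e or trace inequality on the whole fundamental piece, exactly as in the proof of Lemma~\ref{Lemma:EstimateAverage}; this is routine but should be set up on $M_v$, not only on $C_v$. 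For the other direction your construction genuinely differs from the paper's: you transplant the graph eigenfunction as a locally constant function with interpolation in fixed collars of the gluing curves, which gives $\int_{\Omega_\Gamma}|\nabla f|^2\leq C_1\,q_\Gamma(x)$ and hence a self-contained proof of $\sigma_1(\Omega_\Gamma)\leq C_1\lambda_1(\Gamma)$, whereas the paper takes a test function equal to $x(v)$ on $\Sigma_v$ and decaying linearly along $C_v$, deferring the Rayleigh-quotient estimate to~\cite{colbmat}. Your version has the merit that the Dirichlet energy is manifestly of order $q_\Gamma(x)$ rather than $\|x\|^2$, so the resulting bound scales correctly with $\lambda_1(\Gamma)$ even when $\lambda_1(\Gamma)$ is small.
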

The proof of Theorem~\ref{mainthm} will be presented in Section~\ref{Section:ProofComparison}.

\subsection{Expander graphs and the proof of Theorem~\ref{Theorem:MainIntro}}
\label{subsection:proofMainTheoremIntro}
To prove Theorem~\ref{Theorem:MainIntro}, we will use expander graphs,
through one of their many characterizations. 
\begin{defi}\label{Definition:ExpanderGraphs}
  A sequence of $k$-regular graphs $\{\Gamma_N\}_{N\in\mathbb{N}}$ is called
  a \emph{family of expander graphs} if\\
  $\lim_{N\rightarrow\infty}|V(\Gamma_N)|=+\infty$ and 
  $\lambda_1(\Gamma_N)$ is uniformly bounded below by a positive
  constant.
\end{defi}
See~\cite{MR2247919} for a survey of their properties and applications. 
Consider a fundamental piece $M_0$ of genus $0$, with 5 boundary
components, that is with $k=4$. Let $\{\Gamma_N\}$ be a
family of $4$-regular expander graphs such that the number of vertices
$|V(\Gamma_N)|=N$. The existence of this family of expander graphs
follows from the classical probabilistic method~\cite{pinsker}.
It follows from Lemma~\ref{Lemma:genus} that the
genus of $\Omega_{\Gamma_N}$ is
$$\gamma(\Omega_{\Gamma_N})=1+N.$$
By definition, there is a constant $c>0$ such
that $\lambda_1(\Gamma_N)\geq c$ for each $N\in\mathbb{N}$. 
Since the boundary of $\Omega_{\Gamma_N}$ has $N$ boundary components
of length 1, Theorem~\ref{mainthm} leads to
\begin{gather*}
  \sigma_1(\Omega_{\Gamma_N})L(\partial\Omega_{\Gamma_N})\geq \alpha N
  \lambda_1(\Gamma_N)\geq c\alpha N.
\end{gather*}
This completes the proof of Theorem~\ref{Theorem:MainIntro}.

\section{Proof of the comparison results}
\label{Section:ProofComparison}

Let $f:\Omega_\Gamma\rightarrow\mathbb{R}$ be a smooth function. Given
a vertex $v\in V(\Gamma)$, the function $f_v$ is defined to be the
restriction of $f$ to the cylinder $C_v$. On 
each cylinder $C_v$, the function $f_v$ admits a decomposition
$f_v=\overline{f}_v+\tilde{f}_v$
where
$$\overline{f}_v(r)=\int_{\Sigma_v}f(r,x)\,dV_{\Sigma_v}(x)$$
is the average of $f$ on the corresponding slice of $C_v$. It follows
that for each $r\in[0,1]$,
$$\int_{\Sigma_v}\tilde{f}(r,x)\,dV_{\Sigma_v}(x)=0.$$
The function $\overline{f}$ is defined to be $\overline{f}_v$ on each
cylinder $C_v$, and similarly the function $\tilde{f}$ is defined to
be $\tilde{f}_v$ on each $C_v$.

Let $f\in C^\infty(\Omega_\Gamma)$ be a Steklov eigenfunction corresponding to
$\sigma_1(\Omega_\Gamma)$. The function
$$x=x_f:V(\Gamma)\rightarrow\mathbb{R}$$ 
is defined to be the average of
$f$ over the boundary component $\Sigma_v$. Since $|\Sigma_v|=1$ for
each vertex $v$, this is expressed by
\begin{gather*}
  x(v)=\int_{\Sigma_v}f\,dV_{\Sigma_v}=\overline{f}_v(0).
\end{gather*}
Because
\begin{gather*}
  \sum_{v\in V(\Gamma)}x(v)=\sum_{v}\int_{\Sigma_v}f\,dV_{\Sigma_v}=\int_{\Sigma}f\,dV_\Sigma=0,
\end{gather*}
the function $x$ can be used as a trial function in the
variational characterization~(\ref{charvargraph}) of
$\lambda_1(\Gamma)$. 
It follows from the orthogonality of $\overline{f}$ and $\tilde{f}$ on
the boundary $\Sigma=\partial\Omega_\Gamma$ that
\begin{align}\label{Inequality:DebutDePreuve}
  \int_{\partial\Omega_\Gamma}f^2\,dV_\Sigma
  &=\int_{\partial\Omega_\Gamma}(\overline{f}+\tilde{f})^2\,dV_\Sigma\nonumber\\
  &=\sum_{v\in V(\Gamma)}x(v)^2+
  \int_{\partial\Omega_\Gamma}\tilde{f}^2\,dV_\Sigma
  \leq \frac{1}{\lambda_1(\Gamma)}q_\Gamma(x)+
  \int_{\partial\Omega_\Gamma}\tilde{f}^2\,dV_\Sigma.
\end{align}
The two terms on the right-hand side of the previous inequality will
be bounded above in terms of $\|\nabla f\|^2_{L^2(\Omega_\Gamma)}$. In
order to bound $\int_{\partial\Omega_\Gamma}\tilde{f}^2\,dV_\Sigma$,
it will be sufficient to consider the behaviour of $\tilde{f}$ locally
on each cylinders $C_v$. More work will be required to bound
$q_\Gamma(x)$.

\subsection{Local estimate of smooth functions on cylindrical neighbourhoods}
On the model cylinder $C_0=[0,1]\times\Sigma_0$, consider the
following mixed Neumann--Steklov spectral problem:
\begin{gather}\label{SloshingProblem}
  \Delta f=0\mbox{ in }(0,1)\times{\Sigma_0},\\
  \partial_nf=0\mbox{ on }\{1\}\times{\Sigma_0},\nonumber\\
  \partial_nf=\mu f\mbox{ on }\{0\}\times{\Sigma_0}.\nonumber
\end{gather}
This problem is related to the sloshing spectral problem.
See~\cite{BanKulcPolt,Moiseev} for details.

\begin{lemma}\label{Lemma:localestimate}
  Let $\mu$ be the first non-zero eigenvalue of the sloshing problem~(\ref{SloshingProblem}).
  For any smooth function $f:\Omega_\Gamma\rightarrow\mathbb{R}$,
  \begin{gather}\label{Inequality:Localestimate}
    \int_{\partial\Omega_\Gamma}\tilde{f}^2\,dV_\Sigma\leq
    \mu^{-1}\int_{\Omega_\Gamma} |\nabla f|^2.
  \end{gather}
\end{lemma}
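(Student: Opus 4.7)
The plan is to prove the inequality cylinder by cylinder, using the variational characterization of the sloshing eigenvalue $\mu$ on the model cylinder $C_0$. First I would recall that, since the constant function spans the kernel of the sloshing problem~(\ref{SloshingProblem}), the first positive eigenvalue admits the Rayleigh-quotient description
$$
\mu=\inf\left\{\frac{\int_{C_0}|\nabla g|^2}{\int_{\{0\}\times\Sigma_0}g^2}\;\Bigl|\Bigr.\;g\in H^1(C_0),\;\int_{\{0\}\times\Sigma_0}g=0\right\}.
$$
Because each cylindrical neighbourhood $C_v\subset M_v\subset\Omega_\Gamma$ is isometric to $C_0$ with $\Sigma_v$ playing the role of $\{0\}\times\Sigma_0$, the same inequality holds on $C_v$ with the same constant $\mu$.

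Next, for each vertex $v$ I would test the Rayleigh quotient on $C_v$ with the function $\tilde f_v$, i.e.\ the restriction of $\tilde f$ to $C_v$. By the very definition of $\tilde f_v$, the slicewise averages $\int_{\Sigma_v}\tilde f_v(r,\cdot)\,dV_{\Sigma_v}$ vanish for every $r\in[0,1]$; in particular $\int_{\Sigma_v}\tilde f_v=0$, so $\tilde f_v$ is admissible and
$$
\mu\int_{\Sigma_v}\tilde f_v^{\,2}\,dV_{\Sigma_v}\;\leq\;\int_{C_v}|\nabla\tilde f_v|^{2}.
$$

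The remaining step is to compare the right-hand side to $\int_{C_v}|\nabla f|^{2}$. Since $\overline f_v$ depends only on the radial coordinate $r$ on $C_v$, one has $\nabla\overline f_v=\overline f_v{\,}'(r)\partial_r$, and therefore the cross term in $|\nabla f_v|^{2}=|\nabla\overline f_v+\nabla\tilde f_v|^{2}$ reduces to $\overline f_v{\,}'(r)\,\partial_r\tilde f_v$. Integrating by Fubini over $C_v=[0,1]\times\Sigma_v$ and using $\partial_r\!\int_{\Sigma_v}\tilde f_v(r,\cdot)\,dV_{\Sigma_v}=0$ shows that this cross term integrates to zero, so $\int_{C_v}|\nabla\tilde f_v|^{2}\leq\int_{C_v}|\nabla f|^{2}$. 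The cylinders $C_v$ are pairwise disjoint in $\Omega_\Gamma$ and $\partial\Omega_\Gamma=\bigsqcup_v\Sigma_v$, so summing over $v$ yields
$$
\mu\int_{\partial\Omega_\Gamma}\tilde f^{\,2}\,dV_\Sigma\;\leq\;\sum_{v\in V(\Gamma)}\int_{C_v}|\nabla f|^{2}\;\leq\;\int_{\Omega_\Gamma}|\nabla f|^{2},
$$
which is exactly~(\ref{Inequality:Localestimate}).

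The only genuinely non-routine point in this argument is the orthogonal decomposition $\int_{C_v}|\nabla f_v|^{2}=\int_{C_v}|\nabla\overline f_v|^{2}+\int_{C_v}|\nabla\tilde f_v|^{2}$, since without it one would lose a constant and the clean bound $\mu^{-1}$ would have to be replaced by something bigger. Everything else is bookkeeping once the sloshing variational characterization is in hand.
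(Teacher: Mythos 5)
Your proof is correct and follows essentially the same route as the paper: apply the variational characterization of the sloshing eigenvalue $\mu$ on each cylinder $C_v$ to the admissible test function $\tilde f_v$, then use the orthogonal splitting of the Dirichlet energy $\int_{C_v}|\nabla f_v|^2=\int_{C_v}|\nabla\overline f_v|^2+\int_{C_v}|\nabla\tilde f_v|^2$ and sum over the (disjoint) cylinders. The only difference is that you spell out why the cross term vanishes, which the paper leaves as an observation.
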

\begin{proof}
The first non-zero eigenvalue of this problem is characterized by
\begin{gather}\label{VarCharSloshing}
  \mu=\inf\left\{ \frac{\int_{(0,1)\times\Sigma_0} |\nabla f |^2}{\int_{\{0\}\times\Sigma_0}f^2}\,:\, f\in C^\infty([0,1]\times\Sigma_0),
    \int_{\{0\}\times\Sigma_0}f\,ds=0\right\}.
\end{gather}
Since $\tilde{f}$ is orthogonal to constants on each boundary
component $\Sigma_v$, it follows from~(\ref{VarCharSloshing}) that
$$
\int_{\partial\Omega_\Gamma}\tilde{f}^2\,dV_\Sigma\leq
\mu^{-1}\sum_{v\in V(\Gamma)}\int_{C_v} |\nabla\tilde{f}|^2.
$$
The proof of Lemma~\ref{Lemma:localestimate} is completed by observing that
the Dirichlet energy of $f_v:C_v\rightarrow\mathbb{R}$
is expressed by
\begin{gather*}
  \int_{C_v}|\nabla f_v|^2=\int_0^1\left(\overline{f_v}(r)'\right)^2\,dr+\int_{C_v}|\nabla\tilde{f}_v|^2.
\end{gather*}

\end{proof}

\subsection{Global estimate and graph structure}

\begin{lemma} \label{Lemma:GlobalEstimate}
  There exists a positive constant $C_0$ depending only on the fundamental
  piece $M_0$ such that the following holds for any  function $f$ on 
  $\Omega_\Gamma$
  \begin{equation}
     \sum_{v \sim w}\bigl(x(v)-x(w)\bigr)^2\leq C_0 \int_{\Omega_\Gamma} |\nabla f|^2.
  \end{equation}
\end{lemma}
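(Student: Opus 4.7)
The plan is to pass from differences $x(v)-x(w)$ of boundary averages to integrals of $|\nabla f|^2$ on the two pieces $M_v,M_w$ glued along the edge, and then to sum by bounded overlap. The whole argument reduces to a single local estimate on the fixed fundamental piece $M_0$, composed with a soft triangle-inequality/summation step.

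The key analytic input is a trace--Poincaré inequality on $M_0$: for any pair $B,B'$ of its $k+1$ unit-length boundary components, the linear functional $f\mapsto \int_B f\,ds -\int_{B'} f\,ds$ is continuous on $H^1(M_0)$ by the trace theorem, and it vanishes on constants since both curves have length $1$. Hence it factors through the quotient $H^1(M_0)/\mathbb{R}$, which, by the Poincar\'e inequality on the compact connected surface $M_0$, carries a norm equivalent to $\|\nabla\cdot\|_{L^2(M_0)}$. This yields a constant $C_1=C_1(M_0)$, independent of the edge and the graph, such that
$$\left(\int_{B} f\,ds - \int_{B'} f\,ds\right)^{2} \le C_1 \int_{M_0}|\nabla f|^2 \qquad \text{for all } f\in H^1(M_0).$$
This step is the real content of the lemma and where the geometry of $M_0$ enters; everything that follows is bookkeeping.

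Next, fix an edge $e$ joining $v\sim w$, and let $Y_e$ denote the average of $f$ over the unit-length curve along which a boundary component of $M_v$ has been identified with one of $M_w$. Applying the trace--Poincar\'e estimate on $M_v$ to the pair $(\Sigma_v,\text{identified curve})$ gives $(x(v)-Y_e)^2 \le C_1 \int_{M_v}|\nabla f|^2$, and similarly on $M_w$ one has $(Y_e-x(w))^2 \le C_1 \int_{M_w}|\nabla f|^2$. The elementary inequality $(a-b)^2 \le 2(a-c)^2+2(c-b)^2$ with $c=Y_e$ then yields
$$(x(v)-x(w))^2 \le 2C_1\!\left(\int_{M_v}|\nabla f|^2 + \int_{M_w}|\nabla f|^2\right).$$

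Finally, summing over edges $v\sim w$, each piece $M_v$ appears exactly $k$ times because $\Gamma$ is $k$-regular. Since $\Omega_\Gamma$ is the disjoint (up to boundary identifications) union of the $M_v$, this gives
$$\sum_{v\sim w}(x(v)-x(w))^2 \le 2kC_1 \sum_{v\in V(\Gamma)}\int_{M_v}|\nabla f|^2 = 2kC_1\int_{\Omega_\Gamma}|\nabla f|^2,$$
so one can take $C_0=2kC_1$, which depends only on $M_0$ since $k+1$ is the number of boundary components of $M_0$. The one point to keep an eye on is whether the trace constant $C_1$ can really be chosen uniformly for all pairs of boundary components of $M_0$, but since $M_0$ has only finitely many of them this is immediate once the single-pair estimate is established.
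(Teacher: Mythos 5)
Your proof is correct and follows essentially the same strategy as the paper: control the difference of boundary averages by the Dirichlet energy via a trace--Poincar\'e inequality, then sum over edges using $k$-regularity to get the factor $k$. The one genuine difference is where the key estimate is applied. The paper proves the two-boundary-component inequality (its Lemma~\ref{Lemma:EstimateAverage}) for a general surface and applies it directly to the glued surface $M_v\cup M_w$, which implicitly requires observing that these unions range over only finitely many isometry types so that the constant is uniform; you instead apply the inequality twice on single copies of the fundamental piece $M_0$, chaining through the average $Y_e$ of $f$ over the gluing curve and using $(a-b)^2\le 2(a-c)^2+2(c-b)^2$. This costs you a factor of $2$ in the constant but makes the dependence of $C_0$ on $M_0$ alone completely transparent, since only the finitely many pairs of boundary components of the fixed piece $M_0$ ever enter. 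Both arguments are sound.
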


The proof of Lemma~\ref{Lemma:GlobalEstimate} is based on the
following general estimate.
\begin{lemma}\label{Lemma:EstimateAverage}
  Let $\Omega$ be a smooth compact connected Riemannian surface with boundary. Let $A$ and
  $B$ be two of the connected components of the boundary
  $\partial\Omega$, both of length 1.
  There exists a constant $C>0$ depending only on $\Omega$ such that 
  any smooth function $f\in C^\infty(\Omega)$ satisfies
  \begin{gather}
    \left|\int_{A}f-\int_{B}f\right|^2\leq C\int_\Omega|\nabla f|^2
  \end{gather}
\end{lemma}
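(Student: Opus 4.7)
The plan is to prove Lemma~\ref{Lemma:EstimateAverage} by combining the divergence theorem with a suitable auxiliary vector field on $\Omega$ and the Poincar\'e inequality. Since $A$ and $B$ both have length $1$, adding a constant to $f$ leaves the left-hand side unchanged, so one may assume without loss of generality that $\int_\Omega f=0$. This normalization unlocks the Poincar\'e inequality
\begin{equation*}
\int_\Omega f^2 \;\leq\; C_P \int_\Omega|\nabla f|^2,
\end{equation*}
where $C_P$ depends only on $\Omega$.

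Next, I would construct a smooth vector field $X$ on $\Omega$ whose outward normal component $X\cdot\nu$ equals $+1$ on $A$, $-1$ on $B$, and $0$ on every other component of $\partial\Omega$. Such an $X$ is easily obtained from collar neighbourhoods of the boundary components together with a smooth partition of unity, and both $\|X\|_{L^\infty(\Omega)}$ and $\|\nabla\cdot X\|_{L^\infty(\Omega)}$ depend only on $\Omega$. Applying the divergence theorem to $fX$ then gives
\begin{equation*}
\int_A f - \int_B f
\;=\; \int_{\partial\Omega} f\,(X\cdot\nu)\,ds
\;=\; \int_\Omega f\,(\nabla\cdot X) + \int_\Omega \langle \nabla f , X\rangle.
\end{equation*}
Cauchy--Schwarz bounds the second integral on the right by $\|X\|_{L^\infty}\,|\Omega|^{1/2}\,\|\nabla f\|_{L^2(\Omega)}$, and the first by $\|\nabla\cdot X\|_{L^\infty}\,|\Omega|^{1/2}\,\|f\|_{L^2(\Omega)}$. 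The Poincar\'e inequality converts the latter into a multiple of $\|\nabla f\|_{L^2(\Omega)}$. Squaring the resulting linear estimate yields the lemma with a constant $C$ depending only on $\Omega$.

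This argument carries no serious obstacle. The only moderately technical point is the existence of $X$ with prescribed normal trace along the boundary, which is routine for a smooth surface with boundary; the hypothesis $|A|=|B|=1$ is used only to cancel the additive constant in the reduction to mean-zero $f$ and plays no role in the construction of $X$ itself. The overall cost of the argument is absorbed into the single geometric constant $C$, as required.
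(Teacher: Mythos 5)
Your proposal is correct, but it takes a genuinely different route from the paper. The paper sets $g=f-\langle f\rangle$, bounds $\|g\|_{H^1(\Omega)}^2\leq(\mu^{-1}+1)\|\nabla f\|_{L^2}^2$ via the Neumann--Poincar\'e inequality, and then invokes the boundedness of the trace operators $\tau_A:H^1(\Omega)\to L^2(A)$ and $\tau_B:H^1(\Omega)\to L^2(B)$ to control $|\int_A f-\langle f\rangle|$ and $|\int_B f-\langle f\rangle|$ separately, finishing with the triangle inequality. You instead encode the difference $\int_A f-\int_B f$ as a single boundary pairing $\int_{\partial\Omega}f\,(X\cdot\nu)$ for a vector field $X$ with prescribed normal trace, and convert it to interior integrals by the divergence theorem; this replaces the trace theorem by the (more elementary) construction of $X$ from collar neighbourhoods, at the cost of that construction. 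Both arguments hinge on the same Poincar\'e inequality and both yield a constant depending only on $\Omega$; yours is arguably more self-contained and makes the constant more explicit in terms of $\|X\|_{L^\infty}$ and $\|\nabla\cdot X\|_{L^\infty}$, while the paper's is shorter once one accepts trace boundedness as standard. Two minor remarks: your mean-zero reduction is legitimate because $|A|=|B|$ makes the left-hand side invariant under adding constants (in fact you could skip the reduction entirely, since $\int_\Omega\nabla\cdot X=|A|-|B|=0$ lets you replace $f$ by $f-\langle f\rangle$ in the term $\int_\Omega f\,(\nabla\cdot X)$ for free); and note that in the paper's proof the normalization $|A|=|B|=1$ is also used to pass from $|\int_A(f-\langle f\rangle)|$ to $\|\tau_A(g)\|_{L^2(A)}$ by Cauchy--Schwarz, so the hypothesis does slightly more work there than in your argument.
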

In fact, we will use this estimate only for harmonic functions, in
which case it is also possible to prove it using a method similar to
that of~\cite{colbmat}.
\begin{proof}[Proof of Lemma~\ref{Lemma:EstimateAverage}]
Let $x=\int_A f$, $y=\int_Bf$ be the average of $f$ on the two
boundary components $A$, $B$. Let
$$\langle f\rangle=\frac{1}{|\Omega|}\int_\Omega f,\quad
$$
be the average of $f$ on the surface $\Omega$. Finally, set
$g=f-\langle f\rangle.$
Now, since $\int_\Omega g=\int_\Omega (f-\langle f\rangle)=0$,
\begin{gather*}
  \int_\Omega g^2\leq\mu^{-1}\int_\Omega|\nabla g|^2=\mu^{-1}\int_\Omega|\nabla f|^2,
\end{gather*}
where $\mu>0$ is the first non zero Neumann eigenvalue of $\Omega$.
It follows that
\begin{gather}
  \label{ineqproofglobal}
  \|f-\langle f\rangle\|_{H^1(\Omega)}^2=\|g\|_{H^1(\Omega)}^2=\|g\|_{L^2(\Omega)}^2+\|\nabla f\|_{L^2(\Omega)}^2\leq
  (\mu^{-1}+1)\|\nabla f\|_{L^2(\Omega)}^2.
\end{gather}
In other words, the Dirichlet energy of $f$ controls how far $f$ is
from its average $\langle f\rangle$ in $H^1$-norm. This is essentially
a version of the Poincaré Inequality. The restriction of $f$ to $A$ and $B$
are also close to the average $\langle f\rangle$ in $L^2$-norm. Indeed, it follows from
the fact that the trace operators $\tau_a:H^1(\Omega)\rightarrow
L^2(A)$ and $\tau_B:H^1(\Omega)\rightarrow L^2(B)$ are bounded that
\begin{gather*}
  |x-\langle f\rangle|=|\int_A(f-\langle
  f\rangle)|=|\int_A(\tau_A(g))|
  \leq\|\tau_A(g)\|_{L^2(A)}
  \leq
  \|\tau_A\|\|g\|_{H^1(\Omega)},
\end{gather*}
and similarly $|y-\langle f\rangle|\leq\|\tau_B\|\|g\|_{H^1(\Omega)}$,
where $\|\tau_A\|$ and $\|\tau_B\|$ are the operator norms.
These two inequalities together lead to
\begin{align*}
  |x-y|\leq|x-\langle f\rangle|+|y-\langle f\rangle|
  \leq (\|\tau_A\|+\|\tau_B\|)\|g\|_{H^1(\Omega)}.
\end{align*}
In combination with (\ref{ineqproofglobal}) this imply
\begin{align*}
  |x-y|^2\leq (\|\tau_A\|+\|\tau_B\|)^2(\mu^{-1}+1))\int_\Omega|\nabla f|^2.
\end{align*}
One can take $C=(\|\tau_A\|+\|\tau_B\|)^2(\mu^{-1}+1))$. The proof is completed.
\end{proof}

\begin{proof}[Proof of Lemma~\ref{Lemma:GlobalEstimate}]
  For each adjacent vertices $v\sim w$ of the graph $\Gamma$, we apply
  Lemma~\ref{Lemma:EstimateAverage} to the surface $M_v\cup M_w$ with $A=\Sigma_v$ and
  $B=\Sigma_w$ to get
  $$\bigl(x(v)-x(w)\bigr)^2\leq C\int_{M_v\cup M_w}|\nabla f|^2.$$
  Since the graph $\Gamma$ is regular of degree $k$, it follows that
  $$
  \sum_{v\sim w} \bigl(x(v)-x(w)\bigr)^2\leq C\sum_{v\sim w}\int_{M_v\cup M_w}|\nabla f|^2
  =Ck\int_{\Omega_\Gamma}|\nabla f|^2
  $$
\end{proof}


\subsection{The proof of Theorem~\ref{mainthm}}
\subsubsection*{The upper bound}
Let $f$ be a Steklov eigenfunction corresponding to the first non-zero
Steklov eigenvalue $\sigma_1(\Omega_\Gamma)$.
Combining the local estimate obtained in
Lemma~\ref{Lemma:localestimate} and the global estimate of Lemma~\ref{Lemma:GlobalEstimate} with
Inequality~(\ref{Inequality:DebutDePreuve}) leads to
\begin{align*}
  \int_{\partial\Omega_\Gamma}f^2\,dV_\Sigma
  \leq
  \left(\frac{C_0}{\lambda_1(\Gamma)}+\frac{1}{\mu}\right)\int_{\Omega_\Gamma}|\nabla
  f|^2,
\end{align*}
 which of course can be rewritten
 \begin{gather*}
   \sigma_1(\Omega)=
   \frac{\int_{\Omega_\Gamma}|\nabla
     f|^2}{\int_{\partial\Omega_\Gamma}f^2}
   \geq\left(\frac{C_0}{\lambda_1(\Gamma)}+\frac{1}{\mu}\right)^{-1} 
   =\frac{\lambda_1(\Gamma)}{\lambda_1(\Gamma)/\mu+C_0}
   \geq\min\left\{\mu,\frac{1}{C_0}\right\}\frac{\lambda_1(\Gamma)}{\lambda_1(\Gamma)+1}
\end{gather*}
Now, because we are on a regular graph of degree $k$, $\lambda_1\leq
k$, so that
$$\sigma_1(\Omega)\geq \frac{1}{k+1}\min\left\{\mu,\frac{1}{C_0}\right\}\lambda_1(\Gamma),$$
and taking $\beta=\frac{1}{k+1}\min\left\{\mu,\frac{1}{C_0}\right\}$,
the proof of Theorem~\ref{mainthm} is completed.

\subsubsection*{The lower bound}
Let $x:\in\ell^2(\Gamma)$ be an normalized eigenfunction corresponding to
$\lambda_1(\Gamma)$. The function $x$ satisfy
$$\sum_{v\in V(\Gamma)}x(v)^2=1\quad\mbox{ and }\quad
\sum_{v\in V(\Gamma)}x(v)=0.$$
Using $x$, a function $f_x:\Omega_\Gamma\rightarrow\mathbb{R}$ is
defined to be $x(v)$ on $\Sigma_v$ and to decay linearly to zero on
the cylinder $C_v$. The function $f_x$ satisfy
$$\int_{\partial\Omega_\Gamma}f_x=\sum_{v\in V}x(v)=0,$$
and can therefore be used in the variational characterization of $\sigma_1(\Omega_\Gamma)$.
The estimates of the Rayleigh quotient are simple and
follows~\cite[p. 290]{colbmat} verbatim. We will not reproduce it
here.

\subsection*{Acknowledgements}
The authors are thankful to Talia Fern\'os, Iosif Polterovich, and
Thomas Ransford for useful conversations.
\bibliographystyle{plain}
\bibliography{biblioCG}

\end{document}